\documentclass[12pt]{article}

\usepackage[OT1]{fontenc}

\usepackage{amsmath,amssymb,amsthm}
\usepackage{array,booktabs}
\usepackage{enumitem}
\usepackage{titlesec}
\usepackage[hidelinks]{hyperref}
\hypersetup{
  pdftitle={Diameter and radius of an additive-multiplicative graph},
  pdfauthor={Yi Hu and Nan Yang},
  pdfkeywords={Arithmetic graph, diameter, radius, primes in linear forms,
    almost primes, modular obstruction}
}
\numberwithin{equation}{section}

\titleformat{\section}{\normalfont\Large\bfseries\filcenter}
  {\thesection.}{0.6em}{}
\titleformat{\subsection}{\normalfont\normalsize\itshape}
  {\thesubsection.}{0.6em}{}
\renewenvironment{abstract}
  {\par\small\begin{center}\abstractname\end{center}\noindent\ignorespaces}
  {\par\medskip}

\newtheorem{theorem}{Theorem}[section]
\newtheorem{lemma}[theorem]{Lemma}
\newtheorem{proposition}[theorem]{Proposition}
\newtheorem{corollary}[theorem]{Corollary}
\theoremstyle{definition}
\newtheorem{definition}[theorem]{Definition}
\newtheorem{conjecture}[theorem]{Conjecture}
\newtheorem{algorithm}[theorem]{Algorithm}
\newtheorem{question}[theorem]{Question}
\theoremstyle{remark}
\newtheorem{remark}[theorem]{Remark}

\newcommand{\N}{\mathbb{N}}
\newcommand{\Z}{\mathbb{Z}}
\newcommand{\PP}{\mathcal{P}}
\DeclareMathOperator{\diam}{diam}
\DeclareMathOperator{\rad}{rad}
\DeclareMathOperator{\ecc}{ecc}
\DeclareMathOperator{\lcm}{lcm}
\newcommand{\decjoin}[2]{%
  \left\langle\begin{gathered}\mathtt{#1}\\[-2pt]\mathtt{#2}\end{gathered}\right\rangle_{10}}

\title{Diameter and radius of an additive--multiplicative graph\thanks{%
  The authors proposed the problem and outlined the research directions.
  They relied heavily on GPT-5.6 sol to develop the proofs presented in
  this paper.}}
\author{%
  Yi Hu\thanks{Self-employed. Email:
    \href{mailto:wuhuaguoshuzhi@qq.com}{\nolinkurl{wuhuaguoshuzhi@qq.com}}.}
  \and
  Nan Yang\thanks{Microsoft Research. Email:
    \href{mailto:nanya@microsoft.com}{\nolinkurl{nanya@microsoft.com}}.}}
\date{September 30, 2026}

\begin{document}

\maketitle

\begingroup
\renewcommand{\thefootnote}{}
\begin{NoHyper}
\footnotetext{\textit{2020 Mathematics Subject Classification.}
Primary 05C12; Secondary 11A41, 11N35, 11Y16.\newline
\textit{Keywords.} Arithmetic graph, diameter, radius, primes in linear
forms, almost primes, modular obstruction.}
\end{NoHyper}
\endgroup

\begin{abstract}
Let $G$ be the graph on the positive integers in which distinct vertices
are adjacent if they differ by one or their quotient in one order is
prime. We prove that $6\leq\diam(G)\leq7$ and
$4\leq\rad(G)\leq5$.
Under Dickson's conjecture for two linear forms, we prove that
$\diam(G)=6$ and $\rad(G)=4$.
\end{abstract}

\section{Introduction}

Let $\N=\{1,2,\ldots\}$, and let $\PP$ denote the set of primes.
We consider the undirected simple graph $G$ with vertex set $\N$ and
edge relation
\begin{equation}\label{eq:edges}
 x\sim y
 \quad\Longleftrightarrow\quad
 |x-y|=1\quad\text{or}\quad
 \frac{\max\{x,y\}}{\min\{x,y\}}\in\PP.
\end{equation}
Thus a path consists of additions or subtractions of one, multiplications
by a prime, and exact divisions by a prime. Every intermediate value
must be a positive integer.

Write $d(x,y)$ for graph distance, and define
\[
 \ecc(x)=\sup_{y\in\N}d(x,y),\qquad
 \rad(G)=\inf_{x\in\N}\ecc(x),\qquad
 \diam(G)=\sup_{x,y\in\N}d(x,y).
\]
The graph is connected because it contains every edge $n\sim n+1$.
Its multiplicative subgraph has unbounded diameter: for example, the
distance from $1$ to $2^k$ using prime multiplications and divisions
alone is $k$. The addition of the successor edges changes this
behavior: every two vertices of $G$ can be joined in at most seven steps,
and every vertex can be reached from $1$ in at most five steps.
Our main result leaves only two possible values for each of the
diameter and radius.

\begin{theorem}\label{thm:main}
The graph $G$ satisfies
\[
 6\leq\diam(G)\leq7,\qquad 4\leq\rad(G)\leq5.
\]
Moreover, $\ecc(1)\leq5$, and $d(a,b)\leq6$ whenever $a$ and $b$
have the same parity. Under Dickson's conjecture for two linear forms,
one has
\[
 \diam(G)=6,\qquad \rad(G)=\ecc(1)=4.
\]
\end{theorem}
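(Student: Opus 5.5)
The proof splits into four parts: upper bounds via a hub vertex $1$, unconditional lower bounds via explicit hard vertices, and conditional sharpening under Dickson's conjecture.

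\textbf{Upper bounds.} First I would show $\ecc(1)\le5$. The neighbours of $1$ are $2$ and all primes. So it suffices to show every $n$ lies within distance $4$ of some prime, or within distance $3$ of a vertex adjacent to a prime. The natural moves are $n\mapsto n\pm1$, or dividing out a prime factor when $n=pm$ with $m$ prime or small.

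The concrete tool is as follows. For $n$ given, look for a prime $p$ and a prime $q$ with $n\pm1=pq$ or $n\pm1=2pq$, and similar small patterns. Then $n\to n\pm1\to(n\pm1)/p\to\cdots\to1$. If $n\pm1$ or $n$ itself has no such shape, use $n\to 2n\pm1$ or $n\to n p$ followed by $\pm1$ to land on a prime. The unconditional version needs a statement of the form: for every $n$ there is a prime $p$ with $pn\pm1$ prime, or an almost-prime variant. Here Chen-type or Bombieri--Vinogradov results on $pn+1$ having at most two prime factors would be invoked to get a path of length $5$ rather than $4$.

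Then $\diam\le 2\ecc(1)$ gives only $10$, so I would instead route through a common vertex. For $a,b$ of the same parity I would connect each to a large prime pair in a shared arithmetic progression in $3$ steps. For arbitrary $a,b$ one extra successor step costs $1$, giving $7$. The $\rad\le5$ bound follows from $\ecc(1)\le5$.

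\textbf{Lower bounds.} I would exhibit explicit vertices, presumably huge numbers built by the CRT (hence the decimal-joining macro), and verify the distance bounds by a modular obstruction. Choose $N$ so that $N,N\pm1,N\pm2,\ldots$ are all divisible by many small primes and have no prime cofactor structure. For each vertex within small distance of $N$, the multiplicative moves then keep it in a controlled residue class modulo a product $M$ of small primes. One then shows that every vertex at distance $\le2$ from $N$ lies in a set $S_N$ of residues, and that for two such $N,N'$ the sets at radius $3$ are disjoint. This gives $d(N,N')\ge 6$.

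For $\rad\ge4$ the goal is: for every $x$ there is a $y$ with $d(x,y)\ge4$. Using the same CRT construction relative to $x$, pick $y$ such that the balls of radius $\le1$ around $x$ and of radius $\le2$ around $y$ do not meet. The ball of radius $1$ around $x$ is infinite (it contains $px$ for all primes $p$), so the congruence conditions must handle infinitely many primes uniformly. The approach is to force each vertex near $y$ to have two large composite cofactors, so that it cannot be a prime multiple of anything near $x$.

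\textbf{Conditional part.} Under Dickson for two linear forms, the path-to-prime step sharpens: for each $n$ there are primes $p$ with $pn\pm1$ prime, or $n\pm1=p\cdot(\text{prime})$ shapes. This gives $\ecc(1)\le4$ and, via a common prime hub, $d(a,b)\le6$ for opposite parity as well.

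\textbf{Main obstacle.} The hardest step is the unconditional lower bound $\diam\ge6$. It requires a complete case analysis of all length-$5$ paths with infinitely many prime branches, and controlling these via congruences is delicate. A secondary difficulty is making the almost-prime input sufficient for $\ecc(1)\le5$ without local obstructions, for instance when $n$ is divisible by $3$ and parity constraints force $pn\pm1$ to be even.
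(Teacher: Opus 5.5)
There are genuine gaps in three of the four parts. Two pieces are essentially right. The bound $\ecc(1)\le5$ follows from a Chen-type almost-prime input, namely Coleman's theorem for $b_1p-b_2m=b_3$ with $(b_1,b_2,b_3)=(n,1,1)$ and $n$ even. This gives $n\to np\to np-1\to\cdots\to1$ of length at most $4$, plus one successor step for odd $n$. Your reduction from arbitrary endpoints to same-parity endpoints by one successor step is also fine.

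The main gap is the unconditional bound $d(a,b)\le6$ for $a\equiv b\pmod 2$. ``Connect each to a large prime pair in a shared arithmetic progression in 3 steps'' gives no mechanism that makes the two half-paths meet. Meeting requires a solution of a single equation involving both $a$ and $b$, and you name no theorem that produces one. The paper first chooses an auxiliary odd prime $p$ by Dirichlet so that $c=ap+1$ is coprime to $b$; this is possible since $a$ is even or $b$ is odd. Then $c$ and $b$ are coprime and of opposite parity. Coleman's theorem gives $cq-bm=1$ with $q$ prime and $\Omega(m)\le2$, so $a\to ap\to c\to cq\to bm\to\cdots\to b$ has length at most $6$.

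The conditional diameter bound has the same gap. A ``common prime hub'' that must itself be prime imposes a third simultaneous prime value, which two-form Dickson does not provide. Instead, take $xp_0-yr_0=h$, with $h=1$ for coprime $x,y$ of opposite parity and $h=2$ for coprime odd $x,y$. Apply Dickson to the two forms $p_0+yt$ and $r_0+xt$, after checking admissibility, including at $2$. This gives $d(x,y)\le h+2$, hence $5$ for same parity and $6$ in general via the same auxiliary step. It also gives $d(1,n)\le4$ for all $n$. Note that for odd $n$ and odd $p$, $pn\pm1$ is even, so your ``$pn\pm1$ prime'' step must be replaced by $h=2$ or by first moving to $n+1$.

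The radius lower bound is also not established. Constructing $y$ with $B_1(x)\cap B_2(y)=\emptyset$ by forcing ``composite cofactors'' cannot control vertices that contain a free prime. A coincidence $yq\pm1=xp'$ with $q,p'$ prime is a linear relation between two primes, and no size or factorization condition on $y$ excludes it. Only local obstructions can, such as parity or a common prime factor of $x$ and $y$. The same problem arises for $y/\ell\pm1$ when $\ell$ is a prime factor of $y$ that a CRT construction does not control.

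The paper needs no construction. It shows $\#\{n\le X:n\equiv a\pmod2,\ d(a,n)\le3\}=o(X)$:
\begin{enumerate}
\item Purely multiplicative words give endpoints $(a/d)m$ with $\Omega(m)\le3$, a density-zero set.
\item A word with one additive step must use the prime $2$ to restore parity.
\item Words with more additive steps have at most one prime operation.
\end{enumerate}
So in every word with an additive step at most one free prime remains, giving $O(X/\log X)$ endpoints per word.

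Finally, your plan for $\diam(G)\ge6$ has the right shape: a CRT-built pair and a modular obstruction. But the claim that multiplicative moves keep vertices in a controlled residue class modulo $M$ fails exactly when the prime multiplier divides $M$. The paper handles this by branching, word by word over all words of length at most $5$, on which free prime equals the modulus prime, and then checks the resulting finite certificate. Also, $d(N,N')\ge6$ corresponds to separating $B_2(N)$ from $B_3(N')$, not two radius-$3$ balls; that would prove $d\ge7$.
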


The upper bounds use a theorem of Coleman~\cite{Coleman} on the
equation $xp-ym=h$, where $p$ is prime and $m$ has at most two prime
factors. Such a solution gives a short path between $x$ and $y$.
An auxiliary prime removes the coprimality restrictions on the
endpoints. For the diameter lower bound, we give an explicit pair at
distance six and a finite modular certificate excluding every operation
word of length at most five. The verification does not restrict the
sizes of the prime multipliers or intermediate vertices.
Appendix~\ref{app:exclusion} gives pseudocode for generating the
certificate, a separate checker using backward propagation, and proofs
of their soundness.

The radius lower bound is proved in Section~\ref{sec:radius} by a
counting argument: for each fixed vertex, the vertices of the same
parity within three steps form a set of density zero. Thus every
vertex has eccentricity at least four. The stronger upper bound
$\ecc(1)\leq5$ supplies a vertex witnessing $\rad(G)\leq5$.

Section~\ref{sec:dickson} establishes the conditional equalities in
Theorem~\ref{thm:main}. Dickson's conjecture therefore predicts
diameter six and radius four. Either diameter seven or radius five
would contradict that conjecture; the problem is to establish the
predicted values unconditionally.

Section~\ref{sec:progressions} shows that every arithmetic progression
contains infinitely many vertices within six steps of any fixed vertex.
Consequently, congruence conditions on one endpoint cannot by
themselves force its distance from a fixed vertex to be at least seven.
This identifies a limitation of the method used for the lower bound.

\subsection{Related work}
The multiplicative subgraph of $G$, obtained by deleting the successor
edges, is the cover graph of the divisibility order. Its distance is
\[
 d_{\times}(a,b)=\sum_{p\in\PP}|v_p(a)-v_p(b)|.
\]
This metric appears in Mathar's OEIS entry A127185~\cite{OEISmetric}
and is studied by Dominici~\cite{Dominici}, who also determines the
diameter of finite initial intervals in this metric. On all of $\N$,
the multiplicative distance is unbounded. Our results describe the
uniform bounds that become possible after successor edges are added.

Sabellek's number-theoretic game~\cite{Sabellek} uses the directed
moves $n\mapsto n+1$ and $n\mapsto n/p$, with $p\mid n$ prime,
from positions $n>1$. The original move graph is directed; forgetting
its orientations gives the edge relation~\eqref{eq:edges}. Sabellek
asks about winning positions, and Ryde's OEIS entry
A363369~\cite{OEISdirected} records the minimum number of directed
moves needed to reach $1$. These directed distances differ from the
undirected distances studied here. For example, $d(32,1)=2$, via
$32,31,1$, whereas the directed distance is $3$, via $32,33,11,1$.
Thus the earlier game provides an antecedent for the arithmetic moves,
but its game-theoretic results and directed distances do not establish
the diameter and radius bounds in Theorem~\ref{thm:main}.

Other related graphs use different vertex sets or operations.
Perucca, Seur\'e, and Wolff~\cite{Perucca} study directed graphs on
residue classes modulo $m$, generated by addition of one and
multiplication by a fixed integer, and their associated simple
undirected graphs. K\'atai~\cite{Katai} and subsequent
authors~\cite{PollackShapiroSparer,Collison} study directed graphs on
prime vertices, with an arc $p\to q$ when $q\mid p+c$ for a fixed
positive shift $c$.

The number-theoretic input to our upper bounds belongs to the
Chen-sieve tradition of finding a prime and an almost prime in related
linear forms; see~\cite[Chapter~11]{HalberstamRichert}.
Here $P_r$ denotes a positive integer with at most $r$ prime factors,
counted with multiplicity. Liu~\cite{Liu} treated the equation
$xp-yP_3=h$ for coprime positive $x,y$ and $h\in\{1,2\}$ satisfying
$x+y\equiv h\pmod2$. As Coleman explicitly records
in~\cite[Note (iii), p.~3]{Coleman}, Xie~\cite{Xie} improved $P_3$
to $P_2$ in these cases. We use Coleman's more general theorem in
the form stated as Theorem~\ref{thm:coleman}; its proof also yields
infinitely many solutions. The short paths in
Lemma~\ref{lem:coprime} use this established existence result, and
Lemma~\ref{lem:auxiliary} supplies an auxiliary prime to handle
endpoints that need not be coprime.

To the best of our knowledge, the uniform undirected diameter and
radius bounds in Theorem~\ref{thm:main}, including the conditional
equalities under Dickson's conjecture, have not previously been
established for $G$. The graph-specific arguments include the
distance-six certificate, the density estimate for three-step
neighborhoods, and the arithmetic-progression result of
Section~\ref{sec:progressions}.

\subsection{Notation}
For $n\geq1$, let $\Omega(n)$ be the number of prime factors of $n$,
counted with multiplicity, with $\Omega(1)=0$. Write
$B_k(a)=\{n\in\N:d(a,n)\leq k\}$. Whenever an expression is
required to be prime, it is understood to be a positive integer.
Signs in separate occurrences of $\pm$ are independent unless stated
otherwise.

\section{The upper bound}\label{sec:upper}

We use the following form of Coleman's theorem.

\begin{theorem}[Coleman~{\cite[Theorem 1 and Note (i), pp.~2--3]{Coleman}}]\label{thm:coleman}
Let $b_1,b_2,b_3$ be pairwise coprime positive integers, and suppose
that $2\mid b_1b_2b_3$. There are infinitely many solutions of
\[
 b_1p-b_2m=b_3
\]
in which $p$ is prime and $\Omega(m)\leq2$.
\end{theorem}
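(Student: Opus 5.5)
This statement is a cited theorem (Coleman's Theorem~1 together with his Note~(i)), so the paper does not prove it. The natural plan is to reconstruct the Chen-type sieve argument that underlies it. The method is that of Chen's theorem on $p+2=P_2$, adapted to the linear forms $b_1p$ and $b_2m+b_3$. Fix a large parameter $N$. Consider the sequence
\[
 \mathcal{A}=\{(b_1p-b_3)/b_2 : p\leq N,\ p\equiv b_3\bar b_1 \pmod{b_2}\}.
\]
The goal is to show that $\mathcal{A}$ contains $\gg N/(\log N)^2$ elements $m$ with $\Omega(m)\leq2$. Infinitely many solutions then follow by letting $N\to\infty$.

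The first step is to verify that there is no local obstruction. Pairwise coprimality ensures that the congruence class for $p$ modulo $b_2$ is admissible. It also ensures that for each prime $q\nmid b_1b_2b_3$, the form $b_1p-b_3$ avoids $0 \bmod q b_2$ for a positive proportion of residues. The hypothesis $2\mid b_1b_2b_3$ is exactly what rules out the parity obstruction at $q=2$. If all $b_i$ were odd, then for odd $p$ the quantity $b_1p-b_3$ would be even, forcing $2\mid b_2m$, and $m$ would be forced even. So the singular series is positive.

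The second step is to apply a weighted sieve (Chen's switching form) to $\mathcal{A}$, with sifting level $z=N^{1/10}$ and upper range $N^{1/3}$. The lower bound sieve, using the Jurkat--Richert linear sieve, counts $m\in\mathcal{A}$ with no prime factor below $N^{1/10}$. From this one subtracts half the weighted count of $m$ with a prime factor in $[N^{1/10},N^{1/3})$, and the count of $m=q_1q_2q_3$ with $N^{1/3}\leq q_1<q_2<q_3$. The level of distribution needed is supplied by Bombieri--Vinogradov in arithmetic progressions to modulus $b_2d$ with $d\leq N^{1/2-\epsilon}$. The bounded factor $b_1b_2$ only shifts constants.

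The main obstacle is the three-prime term. It must be handled by switching roles: count primes $p$ with $b_1p=b_2q_1q_2q_3+b_3$. This is done by sieving the set $\{b_2q_1q_2q_3+b_3\}$ for primality, using an upper bound sieve whose level of distribution comes from a bilinear Bombieri--Vinogradov theorem for these products. One then checks numerically, as in Chen's computation, that the resulting constants leave a positive main term. This step, with its delicate integrals, is where I expect the real work to lie. In the paper I would simply cite Coleman for it.
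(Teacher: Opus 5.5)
Your bottom line matches the paper. Theorem~\ref{thm:coleman} is quoted from Coleman and not proved there. Your local analysis is also correct: the class of $p$ modulo $b_2$ is a reduced class, and $2\mid b_1b_2b_3$ is exactly what keeps $m$ from being forced even, so the prime $2$ can be sieved. The Chen-type reconstruction you sketch, however, misstates the weight, and as written it would not give $\Omega(m)\leq2$.

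In Chen's weighted sieve the quantity bounded below is
\[
 S(\mathcal{A},N^{1/10})-\tfrac12\sum_{N^{1/10}\leq q<N^{1/3}}S(\mathcal{A}_q,N^{1/10})-\tfrac12\,T_3 .
\]
Here $T_3$ counts the $m\in\mathcal{A}$ of the form $m=q_1q_2q_3$ with $N^{1/10}\leq q_1<N^{1/3}\leq q_2\leq q_3$. You instead subtract the $m=q_1q_2q_3$ with $N^{1/3}\leq q_1<q_2<q_3$. Since the elements of $\mathcal{A}$ have size $\ll N$, such products are negligible, and they are absent when $b_1\leq b_2$, so your term does essentially nothing.

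Consider an $m=q_1q_2q_3$ with no prime factor below $N^{1/10}$ and exactly one prime factor in $[N^{1/10},N^{1/3})$. It keeps weight $1-\tfrac12=\tfrac12>0$. Positivity of your weighted sum therefore only certifies $\Omega(m)\leq3$, and that statement needs no switching at all.

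The switching principle must be applied to $T_3$ with $q_1$ in the small range. That is, you need an upper-bound sieve count of primes $p$ with $b_1p=b_2q_1q_2q_3+b_3$ over exactly those ranges. The coefficient $\tfrac12$ on $T_3$ is not cosmetic either: with coefficient $1$, the standard numerical bounds no longer leave a positive main term. With these corrections your outline is the standard route, and citing Coleman, as the paper does, is the appropriate way to discharge it.
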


\begin{lemma}\label{lem:coprime}
Let $x,y\in\N$ be coprime. If $x$ and $y$ have opposite parity,
then $d(x,y)\leq4$. If both are odd, then $d(x,y)\leq5$.
\end{lemma}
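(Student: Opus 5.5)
The plan is to read a solution of the equation in Theorem~\ref{thm:coleman} directly as a short walk in $G$. Suppose $xp-ym=h$ with $p$ prime, $m\in\N$, $\Omega(m)\leq2$ and small $h\geq1$. Write $m=qr$, where $q,r$ are primes or $1$. Then
\[
 x\;\sim\;xp,\qquad xp\;\to\;xp-1\;\to\;\cdots\;\to\;xp-h=ym\ \ (\text{$h$ successor steps}),\qquad ym\;\sim\;yq\;\sim\;y .
\]
Here the first edge is multiplication by the prime $p$, and the last two are exact divisions by the primes $r$ and $q$ (or fewer steps if $m$ is $1$ or prime). The walk therefore has length at most $1+h+2$. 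Since a walk bounds the distance, any repetitions of vertices along it only help, so no distinctness check is needed.

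For $x,y$ coprime of opposite parity, I will apply Theorem~\ref{thm:coleman} with $(b_1,b_2,b_3)=(x,y,1)$. These are pairwise coprime because $\gcd(x,y)=1$, and $2\mid xy$ because exactly one of $x,y$ is even. This gives $xp-ym=1$, and the walk $x,\,xp,\,ym,\,yq,\,y$ shows $d(x,y)\leq4$.

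For $x,y$ both odd and coprime, I will instead take $(b_1,b_2,b_3)=(x,y,2)$. These are pairwise coprime since $x,y$ are odd and coprime to each other, and $2\mid b_3$. A solution $xp-ym=2$ gives the walk $x,\,xp,\,xp-1,\,ym,\,yq,\,y$, so $d(x,y)\leq5$.

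The only point needing care, and what I expect to be the sole real obstacle, is positivity. All intermediate values must be positive integers, which requires $m\geq1$. We also need $p\geq2$ prime, so that $xp$ is a genuine neighbour, and $xp-1\geq1$, which is automatic. Since Theorem~\ref{thm:coleman} provides infinitely many solutions, I would use this to choose one with $p$ large. Then $ym=xp-h>0$ forces $m\geq1$, and the factors of $m$, being at most two primes, give the required division steps. If Coleman's solutions are understood in positive integers $m$ already, this point is immediate.
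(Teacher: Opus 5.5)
Your proposal is correct and matches the paper's proof: apply Theorem~\ref{thm:coleman} with $(x,y,1)$ or $(x,y,2)$, then multiply by $p$, subtract $h$, and divide by the at most two prime factors of $m$. Your use of the infinitude of solutions to take $p$ large, so that $m\geq1$, spells out the positivity check that the paper states only briefly.
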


\begin{proof}
In the first case, Theorem~\ref{thm:coleman} gives $xp-ym=1$, with
$p$ prime and $\Omega(m)\leq2$. Multiply $x$ by $p$, subtract one,
and divide by the prime factors of $m$. This gives a path of length
$2+\Omega(m)\leq4$.

In the second case, the coefficients $x,y,2$ are pairwise coprime.
Apply the theorem to $xp-ym=2$ and use two subtractions instead of
one. The path has length at most five. All its vertices are positive.
\end{proof}

\begin{lemma}\label{lem:auxiliary}
Let $a,b\in\N$, and suppose that $a$ is even or $b$ is odd.
There are infinitely many odd primes $p$ such that
\[
 \gcd(ap+1,b)=1.
\]
\end{lemma}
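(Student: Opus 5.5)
We reduce the condition $\gcd(ap+1,b)=1$ to finitely many congruence conditions on $p$, one for each prime divisor of $b$, and then produce the required primes with Dirichlet's theorem. Write $q_1,\dots,q_k$ for the distinct prime divisors of $b$ (if $b=1$ there is nothing to prove, since every odd prime works). Then $\gcd(ap+1,b)=1$ holds exactly when $q_i\nmid ap+1$ for every $i$. For each $q_i$ we choose a residue class $r_i\pmod{q_i}$ such that every $p\equiv r_i\pmod{q_i}$ satisfies $ap+1\not\equiv0\pmod{q_i}$. We also impose $p\equiv1\pmod 2$ to force oddness, or absorb this into the condition at $q_i=2$ when $2\mid b$. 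The Chinese remainder theorem then combines these into a single class $r\pmod{M}$ with $M=\lcm(2,q_1,\dots,q_k)$. Provided $\gcd(r,M)=1$, Dirichlet's theorem gives infinitely many primes $p\equiv r\pmod M$, and all of them are odd and satisfy the gcd condition.

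The choice of $r_i$ goes as follows.

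\emph{Case $q_i\mid a$.} Here $ap+1\equiv1\pmod{q_i}$ for every $p$, so we may take $r_i=1$.

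\emph{Case $q_i\nmid a$, $q_i$ odd.} The bad class is the single residue $p\equiv -a^{-1}\pmod{q_i}$. We also need $r_i\not\equiv0$ so that $r$ is coprime to $M$. Since $q_i\geq3$, the set $\{1,\dots,q_i-1\}$ has at least two elements, so one of them differs from $-a^{-1}$; take $r_i$ to be that one.

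\emph{Case $q_i=2$.} Here $b$ is even, so the hypothesis forces $a$ to be even. We are then in the first case, $ap+1$ is odd automatically, and the requirement $r_i=1$ is compatible with $p$ odd.

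The hypothesis is used only to exclude the one genuinely obstructed situation: $b$ even and $a$ odd. In that case every odd $p$ makes $ap+1$ even, hence $\gcd(ap+1,b)\geq2$, and the only remaining prime, $p=2$, gives a single value rather than infinitely many. So the hypothesis is needed exactly at $q_i=2$, and everything else is routine.

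The main point requiring care is the coprimality of the combined class $r$ with $M$, which is needed before Dirichlet's theorem applies. At odd primes $q_i$ this holds because each $r_i$ was chosen nonzero. At the prime $2$ it holds because the class $1\pmod 2$ is used whether or not $2\mid b$, so the parity condition and the condition at $q_i=2$ coincide and the Chinese remainder step is consistent. Once $\gcd(r,M)=1$ is checked, Dirichlet's theorem on primes in arithmetic progressions completes the proof.
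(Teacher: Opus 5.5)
Your proposal is correct and is essentially the paper's proof. Both choose a nonzero residue at each odd prime divisor of $b$ that avoids $-a^{-1}$ (two nonzero residues exist even at $3$), impose $p\equiv1\pmod 2$, combine by the Chinese remainder theorem into a reduced class, and apply Dirichlet's theorem, using the hypothesis only to make $a$ even when $b$ is even.
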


\begin{proof}
For each odd prime $\ell\mid b$, choose a nonzero residue $u_\ell$
modulo $\ell$ such that $au_\ell+1\not\equiv0\pmod\ell$.
If $\ell\mid a$, any nonzero residue works. Otherwise at most one
nonzero residue is forbidden, so a choice exists even when $\ell=3$.
Require also $p\equiv1\pmod2$. The Chinese remainder theorem gives
a reduced residue class satisfying these conditions, and Dirichlet's
theorem gives infinitely many primes in that class.

If $b$ is even, the hypothesis makes $a$ even, so $ap+1$ is odd.
If $b$ is odd, there is no condition at $2$. Thus the chosen primes
satisfy the asserted coprimality condition.
\end{proof}

\begin{proposition}\label{prop:upper}
If $a\equiv b\pmod2$, then $d(a,b)\leq6$. For arbitrary
$a,b\in\N$, one has $d(a,b)\leq7$.
\end{proposition}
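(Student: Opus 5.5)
We combine Lemma~\ref{lem:auxiliary} with Lemma~\ref{lem:coprime}. One auxiliary multiplication by a prime $p$ replaces $a$ by a neighbour $ap$. The next vertex $ap\pm1$ is then coprime to $b$, so the remaining distance can be bounded by Lemma~\ref{lem:coprime}. The parity bookkeeping decides whether the coprime pair has opposite parity (cost at most four) or is odd--odd (cost at most five).

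\emph{Same parity, both even.} Lemma~\ref{lem:auxiliary} applies since $a$ is even, and gives an odd prime $p$ with $\gcd(ap+1,b)=1$. The vertex $x=ap+1$ is odd and $b$ is even, so they are coprime of opposite parity. Lemma~\ref{lem:coprime} gives $d(x,b)\le4$, and the path $a\to ap\to ap+1$ has length two, so $d(a,b)\le6$.

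\emph{Same parity, both odd.} Lemma~\ref{lem:auxiliary} applies since $b$ is odd, giving an odd prime $p$ with $\gcd(ap+1,b)=1$. Now $ap$ is odd and $ap+1$ is even, so $ap+1$ and $b$ are coprime of opposite parity. Again $d(a,b)\le 2+4=6$.

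\emph{Opposite parity.} First I would check whether the same trick gives seven directly. If $a$ is even and $b$ is odd, Lemma~\ref{lem:auxiliary} applies and $ap+1$ is odd. The pair $(ap+1,b)$ is then odd--odd and coprime, so Lemma~\ref{lem:coprime} gives distance at most five, for a total of at most seven. If $a$ is odd and $b$ is even, we swap the roles of $a$ and $b$. Distance is symmetric, so it suffices that one endpoint is even and the other odd, and the hypothesis of Lemma~\ref{lem:auxiliary} is met with the even vertex in the role of $a$. Alternatively, one could use the edge $a\sim a+1$ and the same-parity bound to get $d(a,b)\le 1+6=7$. Either route works.

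\emph{Main obstacle.} The only delicate point is verifying the hypotheses of the cited lemmas at each step. The vertex $ap+1$ must be positive, which is clear. Lemma~\ref{lem:coprime} needs genuine coprimality; Lemma~\ref{lem:auxiliary} supplies it, including at the prime $2$ via the parity hypothesis. Lemma~\ref{lem:coprime} also needs the parities to match its case split. Degenerate cases are harmless: if $ap+1=b$, the distance is even smaller. Lemma~\ref{lem:coprime} covers $x=1$ or $y=1$, since Theorem~\ref{thm:coleman} allows any coprime positive coefficients.
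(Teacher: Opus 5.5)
Your proof is correct and is essentially the paper's argument: the auxiliary prime from Lemma~\ref{lem:auxiliary} gives the detour $a\to ap\to ap+1$ to a vertex coprime to $b$, and Lemma~\ref{lem:coprime} then adds at most four steps in the same-parity case, or five in the opposite-parity case with the even endpoint playing the role of $a$. Your alternative for opposite parity, $d(a,b)\le 1+d(a+1,b)\le 7$, is also valid.
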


\begin{proof}
Suppose first that the endpoints have the same parity. Choose an odd
prime $p$ as in Lemma~\ref{lem:auxiliary} and set $c=ap+1$.
Then $c$ and $b$ are coprime and of opposite parity. Since
$a\sim ap\sim c$, Lemma~\ref{lem:coprime} gives
$d(a,b)\leq2+4=6$.

If the endpoints have opposite parity, exchange them if necessary so
that $a$ is even and $b$ is odd. The same construction makes $c$ and
$b$ coprime odd integers. Hence $d(a,b)\leq2+5=7$.
\end{proof}

\section{An explicit pair at distance six}\label{sec:lower}

The endpoints below were designed by prescribing prime-power
congruences that obstruct short operation words and combining them
by the Chinese remainder theorem. The guiding observation is that
multiplication or exact division by a prime changes any fixed prime
valuation by at most one, whereas adding or subtracting one can
destroy divisibility by an arbitrarily high prime power. Local
conditions exploit this contrast to prevent short paths from matching
the divisibility required at the two endpoints. The size of the
resulting integers reflects the simultaneous congruence conditions.

We verify the resulting pair independently of its construction, using
exact arithmetic and a finite modular certificate. For every operation
word of length at most five, the certificate records a failed modular
test and the branches needed to handle exceptional prime values. A
separate checker verifies these records using backward propagation.
The procedures and their complete input data appear in
Appendix~\ref{app:exclusion}.

For readability, angle brackets with subscript $10$ containing two
lines of digits denote the positive integer obtained by concatenating
the two lines, in their displayed order. Define
\begin{align}
 a&=\decjoin{787713964471027128589830600283640405882003639658}
                  {00348300294278460171184936676238386812500},\label{eq:a}\\
 b&=\decjoin{693614503373734092536437361155200777697197265562}
                  {45843055209221317850552454692640951093751}.\label{eq:b}
\end{align}

Some of the local obstructions can be seen directly. Write $v_\ell(n)$
for the exponent of a prime $\ell$ in $n$. The displayed integers satisfy
\[
 v_5(a)=6,\qquad v_5(b)=0,\qquad
 v_3(a)=v_3(b)=5,\qquad a\equiv0\pmod2,\quad b\equiv1\pmod2.
\]
Each prime multiplication or exact division changes $v_\ell$ by at
most one, so a path with no additive steps needs at least six steps.
If a path of length at most five has exactly one additive step
$x\to x\pm1$, then at most four prime operations occur before and
after that step altogether. Reading from either endpoint shows that
both $x$ and $x\pm1$ are divisible by $3$, which is impossible.
The modular procedure below checks all words, including these two
cases, without requiring a separate classification.

\begin{proposition}\label{prop:distance-six}
For the integers in \eqref{eq:a}--\eqref{eq:b}, one has $d(a,b)=6$.
\end{proposition}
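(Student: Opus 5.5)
The claim $d(a,b)=6$ splits into an upper bound and a lower bound.

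\emph{Upper bound.} We have $a\equiv0$ and $b\equiv1\pmod 2$, so Proposition~\ref{prop:upper} gives only $d(a,b)\leq7$. That is not enough, so I would aim for an explicit path of length six. A natural shape removes the large $5$-adic part first and then uses additive steps to bridge the residual. For instance, search for primes $q,r$ with $a/5\cdot q\pm1$ or $bq\pm1$ equal to a product of few primes times a cofactor of $b$ or $a$. The explicit path would be exhibited as a list of seven integers $a=x_0,x_1,\dots,x_6=b$. Each consecutive ratio or difference is then checked by exact arithmetic, with certified primality tests for the prime quotients. An alternative is to adapt the proof of Lemma~\ref{lem:coprime}, but the endpoints are not coprime ($3^5$ divides both), so an explicit computer-found path is the cleaner route.

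\emph{Lower bound.} Here I must exclude every path of length at most five. A path is an operation word $w$ over the alphabet $\{+1,-1,\times p,\div p\}$, where the primes $p$ are unrestricted. There are finitely many words of length $\leq5$ up to the choice of primes. For each word I would pick a modulus $M$, a product of prime powers dividing the structure of $a$ and $b$ such as $2$, $3^6$, $5^7$, and the other CRT primes used in the construction. I then propagate the residue of $a$ forward and that of $b$ backward, tracking $p\bmod M$ as a symbolic unknown ranging over units mod $M$. The exceptions are $p$ equal to a prime dividing $M$; those are split as separate branches with $p$ fixed. Division steps impose the constraint $p\mid x$, and they shift valuations exactly, so I would track prime-power valuations together with residues. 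The word is excluded once the forward and backward sets of (valuation, residue) states are disjoint. The valuation observations preceding the proposition already dispose of the words with zero or one additive step. The remaining words have two to five additive steps, and hence at most three prime operations. For these, the bounded change of $v_3$ and $v_5$, combined with residue obstructions at auxiliary primes, should suffice.

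\emph{Main obstacle.} The main obstacle is making this finite verification genuinely sound, since primes and intermediate values are unbounded. The key point is that each state must be an over-approximation: a set of residues mod $M$, with valuations capped at the modulus exponent. Multiplication by an unknown unit $p$ then maps a residue set to a union over all units, and the branch $p=\ell$ for $\ell\mid M$ must be treated explicitly. I would first try a forward/backward meet-in-the-middle on these abstract states, organized into a certificate listing, for each word, the failing test. I expect the delicate cases to be words such as $\times p,+1,\div q,-1,\times r$, where a unit multiplier can absorb most residue information. For those I would introduce additional auxiliary primes in $M$ and branch on small exceptional values of $p$.
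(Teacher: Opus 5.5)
Your plan is essentially the paper's proof. The upper bound is an explicit six-step path checked by exact arithmetic and certified primality; the paper's path is $a\to ap\to ap+1\to q(ap+1)\to(b+1)r\to b+1\to b$, resting on the identity $q(ap+1)-1=(b+1)r$ rather than on an initial division by $5$. The lower bound is the same finite modular exclusion of all $1365$ words of length at most five: an independent prime per multiplicative step, unit-orbit over-approximation, and branching on a free prime equal to the modulus prime, recorded as a certificate. The differences are only in implementation. The paper tests one prime power $\ell$ or $\ell^2$ ($\ell\leq73$) at a time, so each failed test branches only on $\ell$ rather than on every prime dividing a composite $M$. It uses forward propagation with a separate backward checker, and it needs neither $3^6$, $5^7$, explicit valuation tracking, nor a separate treatment of words with at most one additive step.
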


\begin{proof}
For the lower bound, enumerate all words of length at most five in the
four operations $+1,-1,\times p,\div p$, assigning an independent
prime variable to each multiplicative or divisive occurrence. There are
\[
 \sum_{j=0}^{5}4^j=1365
\]
words, including the empty word. Apply Algorithm~\ref{alg:exclude}
with the moduli
\begin{equation}\label{eq:moduli}
 \mathcal{M}=\{\ell,\ell^2:\ell\in\PP,\ 2\leq\ell\leq73\}.
\end{equation}
It excludes all $1365$ words and produces a certificate with $21906$
distinct nodes. The separate checker in Appendix~\ref{app:exclusion}
accepts every node and confirms that every word is covered.
The soundness proofs there imply $d(a,b)\geq6$.

For the reverse inequality, set
\begin{align*}
 p&=\decjoin{452749961069777468738411399464740770981010595670}
                  {9886231801077784553522600406719687081399},\\
 q&=166973,\\
 r&=\decjoin{858529160588943050156391109717017081186495767117}
                  {812089417589062500543606671054056001051192861}.
\end{align*}
These three integers are prime and satisfy the exact identity
\begin{equation}\label{eq:six-identity}
 q(ap+1)-1=(b+1)r.
\end{equation}
Consequently
\begin{equation}\label{eq:six-path}
 a\longrightarrow ap\longrightarrow ap+1\longrightarrow q(ap+1)
 \longrightarrow(b+1)r\longrightarrow b+1\longrightarrow b
\end{equation}
is a path of length six. The identity is checked by integer arithmetic;
the three primality assertions can be checked with a rigorous
primality algorithm, such as \texttt{fmpz\_is\_prime} in
FLINT~\cite{FLINT}. Appendix~\ref{app:paths} gives the corresponding
pseudocode.
\end{proof}

\begin{remark}\label{rem:unbounded}
The lower-bound computation has no bound on the sizes of intermediate
vertices or prime multipliers. It rules out entire operation words by
necessary congruence conditions. A breadth-first search restricted to
a finite interval would not suffice for this purpose.
\end{remark}

\section{The radius}\label{sec:radius}

\begin{proposition}\label{prop:radius}
One has
\[
 4\leq\rad(G)\leq\ecc(1)\leq5.
\]
More precisely, $d(1,n)\leq4$ for every even $n$.
\end{proposition}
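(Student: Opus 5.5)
The proof has three parts: the bound $d(1,n)\leq4$ for even $n$, the bound $\ecc(1)\leq5$, and the lower bound $\rad(G)\geq4$.

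\emph{The upper bounds.} Let $n$ be even. Since $\gcd(1,n)=1$ and $1,n$ have opposite parity, Lemma~\ref{lem:coprime} gives $d(1,n)\leq4$ directly. Concretely, Theorem~\ref{thm:coleman} with $b_1=1$, $b_2=n$, $b_3=1$ yields $p-nm=1$, and the path is $1\to p\to p-1=nm\to\cdots\to n$. Now let $n$ be odd. If $n=1$ there is nothing to prove. Otherwise $n-1$ or $n+1$ is an even positive integer adjacent to $n$, so $d(1,n)\leq1+4=5$. Hence $\ecc(1)\leq5$, and so $\rad(G)\leq\ecc(1)\leq5$.

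\emph{The lower bound.} The plan is the counting argument announced in the introduction. Fix $x\in\N$. I would show that
\[
 S_x=\{n\in B_3(x):n\equiv x\pmod 2\}
\]
has natural density zero. Since half of all integers have the parity of $x$, it follows that some vertex lies at distance at least $4$ from $x$, so $\ecc(x)\geq4$ for every $x$ and therefore $\rad(G)\geq4$.

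To estimate $S_x$, enumerate the operation words of length at most $3$ in $\pm1,\times p,\div p$ that start at $x$ and end at a vertex of the same parity as $x$.
\begin{itemize}
\item Words using only $\pm1$ steps, or only divisions, reach finitely many vertices.
\item Each word produces at most one free "large" parameter per multiplication. Words with a single multiplication by an unrestricted prime give sets such as $\{xp\}$, $\{(x\pm1)p\}$, $\{xp\pm1\}$ and $\{(x/q)p\}$. Each has counting function $O(N/\log N)$, because the number of primes $p\leq N/c$ is $O(N/\log N)$ for fixed $c$.
\item Words with two multiplications, such as $xpq$, $x\cdot p\cdot q$ or $(xp\pm1)q$, produce $n$ with at most two large prime factors after removing a bounded cofactor. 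The number of $n\leq N$ of the form $c\,pq$, or of the form $(cp\pm1)q$, is $O(N\log\log N/\log N)$.
\item The parity restriction removes the dangerous words. A word with three multiplications has no additive step, so it reaches $x p_1p_2p_3$ with all $p_i$ odd (parity is preserved only if $p_i\neq2$, or the parity bookkeeping forces this). Such integers are $x$ times an integer with $\Omega=3$, which again form a set of density zero ($O(N(\log\log N)^2/\log N)$).
\item Words of length $3$ with one additive step and two multiplications, such as $xpq\pm1$ or $(xp\pm1)q$, are each counted as above.
\end{itemize}
In every case the reached set lies in finitely many translates and dilates of sets of integers with boundedly many prime factors, each of density zero.

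\emph{Main obstacle.} The hard step is making this enumeration airtight. I must check that every length-$\le3$ word either ends at the wrong parity, or produces a set dominated by integers $c\,m\pm e$ with $\Omega(m)\leq3$ and $c,e$ bounded in terms of $x$. Divisions by a prime are harmless because only finitely many primes divide vertices reached without multiplication. The words $\times p,-1,\times q$ and $\times p,+1,\times q$ need the Hardy--Ramanujan bound $\#\{m\leq N:\Omega(m)=k\}\ll N(\log\log N)^{k-1}/\log N$ applied after conditioning on the choices. The subtle case is a division after $+1$, since division can then be by a large prime: the word $\times p,\pm1,\div q$ gives $(xp\pm1)/q$, which need not be sparse. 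Parity is what saves this case. For the end vertex to have the same parity as $x$, that word needs $xp\pm1$ even, so $x$ is odd. I expect to handle these cases by showing that such words necessarily change parity or else restrict to a thin set, and this parity-and-word case analysis is where I would concentrate the effort.
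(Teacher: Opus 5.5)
Your upper bounds are correct. For odd $n\ge3$, stepping to the even neighbour $n-1$ and using $d(1,n-1)\le4$ is equivalent to the paper's use of the both-odd case of Lemma~\ref{lem:coprime} with $x=1$.

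The lower bound follows the paper's density strategy, but it stops at the only nontrivial case. You correctly note that the word $\times p,\pm1,\div q$ yields $(xp\pm1)/q$, which is not sparse without further restriction. You then leave it as something you ``expect to handle''. Until that case is settled, your summary claim is unproved, namely that every word lands in finitely many translates and dilates of integers with boundedly many prime factors. Before parity is imposed, that claim is in fact false for this word. Your partial step is also inaccurate: same parity does not force $xp\pm1$ to be even. For odd $x$ and $p=2$, the endpoint $(2x\pm1)/q$ is odd.

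The missing idea is the paper's parity observation.
\begin{itemize}
\item Multiplication or exact division by an odd prime preserves parity, while each additive step flips it.
\item So a word with exactly one additive step, and hence at most two prime operations, can end at the parity of $x$ only if one prime operation uses $2$.
\item After fixing where the $2$ occurs, at most one free prime remains. The endpoints then have the form $up+v$, giving $O_x(X/\log X)$ values up to $X$, or the form $u/p+v$.
\item The second form gives only finitely many values, because integrality of $(U+Vp)/(Dp)$ forces $p\mid U$.
\end{itemize}
Concretely, for your word:
\begin{itemize}
\item If $x$ is even, the endpoint is odd, so the word is irrelevant.
\item If $x$ and $p$ are odd, then $xp\pm1$ is even, and an odd endpoint forces $q=2$, leaving $(xp\pm1)/2$.
\item If $p=2$, then $q\mid 2x\pm1$, so there are finitely many endpoints.
\end{itemize}

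This reduction also makes your Hardy--Ramanujan counts for two-prime sets like $(cp\pm1)q$ unnecessary. Only the words with no additive step need the $\Omega(m)\le3$ bound, applied to endpoints $(x/d)m$ with $d\mid x$. Separately, your aside that three multiplications preserve parity only if every $p_i\ne2$ is wrong for even $x$. It is harmless, though, since those endpoints are sparse regardless of parity.
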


\begin{proof}
The upper bounds follow by applying Lemma~\ref{lem:coprime} with
$x=1$. For the lower bound, fix $a\in\N$. We show that
\begin{equation}\label{eq:sparse-three-ball}
 \#\{n\leq X:n\equiv a\pmod2,\ d(a,n)\leq3\}=o_a(X).
\end{equation}
There are only finitely many operation words of length at most three.
We divide them according to the number of additive steps.

If there are no additive steps, cancellation of prime factors expresses
the endpoint as $(a/d)m$, where $d\mid a$ and $\Omega(m)\leq3$.
For each fixed $d$, the number of such endpoints up to $X$ is
\[
 O_a\bigl(X(\log\log X)^2/\log X\bigr),
\]
by the standard estimates
for integers with a bounded number of prime factors;
see~\cite{MontgomeryVaughan}.

If there is exactly one additive step, preserving the parity of the
endpoint requires one of the at most two prime operations to use the
prime $2$. Indeed, multiplication or exact division by an odd prime
preserves parity, whereas the additive step changes it. After fixing
the position and direction of an operation using $2$, at most one prime
variable remains. Its endpoints have the form $up+v$, with fixed
rational $u,v$, or $u/p+v$. For the first form, if $u>0$, the condition
$1\leq up+v\leq X$ restricts $p$ to $O_a(X)$, giving
$O_a(X/\log X)$ endpoints. If $u<0$, there are only finitely many
positive endpoints, and if $u=0$, the expression is constant.
For the second form, write $u=U/D$ and $v=V/D$, where $U,V,D$ are
integers and $D>0$. Integrality of $(U+Vp)/(Dp)$ implies $p\mid U$.
If $U\ne0$, this leaves finitely many choices of $p$; if $U=0$,
the expression is constant.

With two or three additive steps, at most one prime operation remains,
so the same argument applies. This proves~\eqref{eq:sparse-three-ball}.
There are therefore infinitely many vertices outside $B_3(a)$, and
$\ecc(a)\geq4$ for every $a$.
\end{proof}

\section{Consequences of Dickson's conjecture}\label{sec:dickson}

\begin{definition}\label{def:admissible}
Two integer linear forms $L_1(t),L_2(t)$ are \emph{admissible} if
their leading coefficients are positive and, for every prime $\ell$,
there is an integer $t$ such that $\ell\nmid L_1(t)L_2(t)$.
\end{definition}

\begin{conjecture}[Dickson, two linear forms~\cite{Dickson}]\label{conj:dickson}
An admissible pair of distinct, nonproportional integer linear forms
takes simultaneous prime values for infinitely many positive integers.
\end{conjecture}

\begin{lemma}\label{lem:dickson-distance}
Assume Dickson's conjecture. If $x,y$ are coprime and of opposite
parity, then $d(x,y)\leq3$. If they are coprime and odd, then
$d(x,y)\leq4$.
\end{lemma}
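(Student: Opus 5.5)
The plan is to mimic the proof of Lemma~\ref{lem:coprime}, with Conjecture~\ref{conj:dickson} in place of Theorem~\ref{thm:coleman}. The point is that the almost prime $m$ there can now be taken to be a genuine prime $q$. In the opposite-parity case I will produce primes $p,q$ with $xp-yq=1$. Then
\[
 x\longrightarrow xp\longrightarrow xp-1=yq\longrightarrow y
\]
is a path of length $3$. In the odd case I will produce primes with $xp-yq=2$. This gives the path $x\to xp\to xp-1\to xp-2=yq\to y$ of length $4$. All vertices are positive once $p,q$ are positive primes, and nothing special happens when $x=1$ or $y=1$.

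To get the linear forms, fix $h\in\{1,2\}$ according to the case. Since $\gcd(x,y)=1$, choose positive integers $s_0,t_0$ with $xs_0-yt_0=h$. The general solution is then $s=s_0+yk$, $t=t_0+xk$. Set
\[
 L_1(k)=s_0+yk,\qquad L_2(k)=t_0+xk.
\]
Both leading coefficients are positive. The forms are not proportional: proportionality would force $xs_0=yt_0$, contradicting $xs_0-yt_0=h\neq0$. In particular they are distinct. Once admissibility is checked, Conjecture~\ref{conj:dickson} gives infinitely many $k\ge1$ with $p=L_1(k)$ and $q=L_2(k)$ both prime, as required.

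Admissibility is the only real check, and I expect the small primes $\ell=2,3$ to be the delicate point. First take an odd prime $\ell$.
\begin{itemize}
\item If $\ell\mid y$, then $L_1\equiv s_0\pmod\ell$, and $xs_0\equiv h\not\equiv0\pmod\ell$, so $L_1$ never vanishes mod $\ell$. Only $L_2$ can vanish, in at most one class, leaving a good class.
\item If $\ell\mid x$, the argument is symmetric, using $-yt_0\equiv h$.
\item If $\ell\nmid xy$, each form vanishes in exactly one class mod $\ell$. That forbids at most two classes, and $\ell\ge3$ leaves at least one. Note $h=2$ is coprime to every odd $\ell$.
\end{itemize}
Now take $\ell=2$.
\begin{itemize}
\item In the opposite-parity case, say $y$ is even (the other case is symmetric). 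Then $s_0$ is odd, because $xs_0\equiv1\pmod 2$, so $L_1$ is always odd. Since $x$ is odd, choosing the parity of $k$ makes $L_2$ odd.
\item In the odd case, $x,y$ odd and $xs_0-yt_0=2$ give $s_0\equiv t_0\pmod2$. Changing the parity of $k$ flips the parities of both forms together. Choosing $k$ so that $L_1(k)$ is odd therefore makes $L_2(k)$ odd as well.
\end{itemize}
Thus for every prime $\ell$ some $k$ makes $\ell\nmid L_1(k)L_2(k)$, so the pair is admissible. This yields the bounds $d(x,y)\le3$ and $d(x,y)\le4$.
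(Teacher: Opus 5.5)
Your proof is correct and follows the paper's argument essentially step for step. Like the paper, you parametrize the solutions of $xp-yq=h$ with $h\in\{1,2\}$ as the linear forms $s_0+yk$ and $t_0+xk$, derive nonproportionality from $h\neq0$, check admissibility prime by prime with the same parity analysis at $\ell=2$, and read off a path of length $h+2$ from Dickson's conjecture.
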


\begin{proof}
Put $h=1$ in the first case and $h=2$ in the second. Choose integers
$p_0,r_0$ satisfying $xp_0-yr_0=h$. All integer solutions are
\[
 p=p_0+yt,\qquad r=r_0+xt.
\]
These forms are nonproportional because $xp_0-yr_0=h\ne0$, and
their leading coefficients are positive. If a prime $\ell$ divides
$x$ or $y$, the other coefficient and $h$ are nonzero modulo
$\ell$, so at most one residue of $t$ is forbidden. If
$\ell\nmid xy$ and $\ell>2$, at most two residues are forbidden.
At $2$, either exactly one coefficient is even, or $x,y$ are odd
and $h=2$, in which case the two forms have the same parity and can
both be made odd. Thus the pair is admissible.

Dickson's conjecture supplies prime values of $p$ and $r$. The
equation $xp-yr=h$ gives a path with one multiplication, $h$
subtractions, and one division, of length $h+2$.
\end{proof}

\begin{proposition}\label{prop:dickson}
Assume Dickson's conjecture for two linear forms. Then
\[
 \diam(G)=6,\qquad \rad(G)=\ecc(1)=4.
\]
\end{proposition}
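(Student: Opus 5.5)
The plan is to prove two upper bounds, $\ecc(1)\le4$ and $\diam(G)\le6$, and then combine them with the unconditional lower bounds already established: $\rad(G)\ge4$ from Proposition~\ref{prop:radius} and $\diam(G)\ge6$ from Proposition~\ref{prop:distance-six}.

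\emph{The radius.} Apply Lemma~\ref{lem:dickson-distance} with $x=1$. Every $n$ is coprime to $1$. If $n$ is even, then $1$ and $n$ have opposite parity, so $d(1,n)\le3$. If $n$ is odd, then $d(1,n)\le4$. Hence $\ecc(1)\le4$. Together with $4\le\rad(G)\le\ecc(1)$ from Proposition~\ref{prop:radius}, this gives $\rad(G)=\ecc(1)=4$.

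\emph{The diameter.} I would repeat the argument of Proposition~\ref{prop:upper}, with Lemma~\ref{lem:dickson-distance} in place of Lemma~\ref{lem:coprime}. Let $a,b\in\N$.
\begin{itemize}[nosep]
\item If $a\equiv b\pmod2$, Lemma~\ref{lem:auxiliary} gives an odd prime $p$ with $\gcd(ap+1,b)=1$. Put $c=ap+1$. Then $c$ and $b$ are coprime and of opposite parity, so $d(c,b)\le3$. Since $a\sim ap\sim c$, we get $d(a,b)\le2+3=5$.
\item If $a$ and $b$ have opposite parity, swap them so that $a$ is even and $b$ is odd. The same construction makes $c$ and $b$ coprime odd integers, so $d(c,b)\le4$ and $d(a,b)\le2+4=6$.
\end{itemize}
Thus $\diam(G)\le6$, and Proposition~\ref{prop:distance-six} supplies the matching lower bound.

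This is mostly bookkeeping. The only point I would check carefully is that the hypotheses of Lemma~\ref{lem:auxiliary} hold in each case. In the same-parity case, either $a$ is even or $b$ is odd, so the lemma applies. In the opposite-parity case we arranged $a$ even. I would also confirm that Lemma~\ref{lem:dickson-distance} needs nothing beyond coprimality and the parity conditions, including for the degenerate value $x=1$. There it is fine: for $x=1$ the two linear forms are $p_0+nt$ and $r_0+t$, which are still admissible and nonproportional. No genuine obstacle is expected.

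As a side remark, the same-parity case yields the stronger conditional bound $d(a,b)\le5$. Since the pair in Proposition~\ref{prop:distance-six} has opposite parity, this is consistent with that proposition.
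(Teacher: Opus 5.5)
Your proposal is correct and matches the paper's proof. The paper likewise reruns Proposition~\ref{prop:upper} with Lemma~\ref{lem:dickson-distance} in place of Lemma~\ref{lem:coprime}, applies that lemma directly to $1,n$, and takes the matching lower bounds from Propositions~\ref{prop:distance-six} and~\ref{prop:radius}. Your extra checks (the hypotheses of Lemma~\ref{lem:auxiliary} in each case, and the degenerate forms when $x=1$) are correct details that the paper leaves implicit.
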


\begin{proof}
Repeat the proof of Proposition~\ref{prop:upper}, using
Lemma~\ref{lem:dickson-distance} instead of Lemma~\ref{lem:coprime}.
This gives distance at most five for endpoints of the same parity and
at most six for arbitrary endpoints. Proposition~\ref{prop:distance-six}
gives equality for the diameter.

Apply Lemma~\ref{lem:dickson-distance} directly to $1,n$. This gives
$d(1,n)\leq4$ for every $n$, and Proposition~\ref{prop:radius}
supplies the matching lower bounds. Together with the unconditional
bounds in Sections~\ref{sec:upper}--\ref{sec:radius}, this completes
the proof of Theorem~\ref{thm:main}.
\end{proof}

\begin{remark}\label{rem:not-equivalent}
The equalities $\diam(G)=6$ and $\rad(G)=4$ have not been shown
here to imply Dickson's conjecture. Proposition~\ref{prop:dickson}
does imply that a pair at distance seven, or a vertex at distance five
from $1$, would contradict the two-form conjecture.
\end{remark}

\section{Arithmetic progressions and local obstructions}\label{sec:progressions}

The modular argument for the lower bound in Section~\ref{sec:lower}
does not extend to an exclusion of all six-step paths based only on
congruence conditions on the endpoints.

\begin{proposition}\label{prop:progressions}
Fix $a\in\N$. Every arithmetic progression contains infinitely
many positive integers $b$ with $d(a,b)\leq6$. More precisely, every
progression containing integers of the same parity as $a$ contains
infinitely many such integers with $d(a,b)\leq5$.
\end{proposition}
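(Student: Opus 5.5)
We build the required integers $b$ inside a prescribed progression directly, mirroring the proof of Proposition~\ref{prop:upper}, and use the infinitude of solutions in Theorem~\ref{thm:coleman} together with Dirichlet's theorem to land in the progression. Fix the progression $\{n\equiv r\pmod q\}$. The parity-compatible case carries the content; the first assertion then follows by a one-step shift.

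\emph{Step 1 (same parity).} Suppose the progression contains integers of the parity of $a$. Multiply $a$ by an auxiliary odd prime $p$ and add one, so $c=ap+1$ has parity opposite to $a$. Then use the path of Lemma~\ref{lem:coprime} backwards: we want $b$ with $cp'-bm=1$, i.e.\ $b\sim bm$ after dividing out $\Omega(m)\leq2$ primes, giving $d(a,b)\leq 2+1+1+2=6$. To save a step, avoid the auxiliary step instead. Start from $a$, multiply by a prime $P$, add or subtract $1$, and divide by at most two primes. Then $d(a,b)\leq 4$ whenever $aP\pm1=bm$ with $\Omega(m)\leq2$, but this changes parity unless $m$ is even. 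The cleanest choice is therefore: $aP+1=2b$ when $a$ is odd, using the path $a\to aP\to aP+1\to b$ of length $3$. When $a$ is even, use $aP+1\to aP+2\to (aP+2)/2=b$, so $b=(aP+2)/2$, of length $4$, and here $b$ has parity of $a$ only if $aP/2$ is odd. To handle all cases uniformly, I would instead take $b=(aP\pm1\pm1)/\ell$ or $b=aP\pm 2$ type expressions, namely $b=aP+2$ for length $3$. This has the same parity as $a$, and $b=aP+2$ lies in the progression iff $aP\equiv r-2\pmod q$. That requires a solvable congruence with $P$ coprime to $q$, which fails when $\gcd(a,q)\nmid r-2$.

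\emph{Step 2 (removing the gcd obstruction).} This is the main obstacle: the progression may be incompatible with all the simple forms $aP+h$. I would first move from $a$ to a vertex $a'=aP_1+1$, or $aP_1-1$, with $P_1$ chosen by Dirichlet so that $\gcd(a',q)=1$, costing two steps. Then use $b=a'P_2+h$ with $h\in\{0,\pm1,\pm2\}$ chosen for parity, for instance $b=a'P_2$ directly, or $b=a'P_2\pm1$. Since $a'$ is invertible mod $q$, the condition $a'P_2\equiv r-h\pmod q$ is solvable in $P_2$ whenever $r-h$ is a unit mod $q$. Choose $h\in\{0,1,2\}$ and the sign of $a'=aP_1\pm1$ so that the parity matches and $r-h$ is coprime to $q$. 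If not all such $r-h$ can be units (for example when $3\mid q$), allow also division: take $b=(a'P_2+h)/s$, with $s$ a prime dividing $q$-compatible values, or multiply $a'$ by a prime in the appropriate class. The step count stays within $2+1+2=5$ for the same-parity case, with parity tracked as in the proof of Proposition~\ref{prop:radius}: odd-prime operations preserve parity, each $\pm1$ flips it. Dirichlet's theorem supplies infinitely many $P_2$, hence infinitely many $b$.

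\emph{Step 3 (arbitrary progressions).} If the progression contains only integers of the opposite parity to $a$, i.e.\ $q$ is even and $r\not\equiv a\pmod 2$, apply Step~1 to the progression $r-1\pmod q$, whose elements have the parity of $a$. Each resulting $b'$ with $d(a,b')\leq5$ gives $b=b'+1$ in the original progression with $d(a,b)\leq6$. Infinitely many $b'$ yield infinitely many $b$. The delicate point throughout is the local case analysis at primes dividing $q$ (especially $2$ and $3$) in Step~2, ensuring a unit residue target is always reachable within the step budget.
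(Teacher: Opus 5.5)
Your Step~3 is fine and is how the paper passes to arbitrary progressions, and Step~1 can be discarded. The gap is in Step~2, which carries all the content. Because your path ends with additive steps, $b=a'P_2+h$ with $a'$ a unit modulo $q$ and $P_2$ in a reduced class, it can only reach residues $r$ for which $r-h$ is a unit modulo $q$ for some parity-compatible $h$ with $|h|\leq2$. Such an $h$ need not exist. For example, let $a$ be even and take the progression $b\equiv4\pmod{30}$. Then $a'$ and $P_2$ are odd, so $b$ even forces $h=\pm1$, but $r-1\equiv3$ and $r+1\equiv5$ are both non-units modulo $30$.

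There is a second failure when $a$ is odd and $q$ is even. Then $aP_1\pm1$ is even for every odd prime $P_1$, so $\gcd(a',q)=1$ cannot be arranged; this is why Lemma~\ref{lem:auxiliary} assumes $a$ even or $b$ odd. Your fallback (``allow also division'') points toward a five-step word ending in a multiplicative operation. However, it is not specified, and the residue bookkeeping it needs is exactly what you leave open.

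The missing idea is to make the final step a multiplication by a free prime. Then the non-unit part of the target residue is matched exactly by the preceding vertex, and Dirichlet's theorem only has to supply a unit. After making $q$ even and $r\equiv a\pmod2$, the paper uses
\[
 a\to aP_1\to c=aP_1+1\to P_2c\to P_2c+1\to b=s(P_2c+1),
\]
with $P_1$ an odd prime making $c$ a unit modulo every odd prime power $\ell^e$ exactly dividing $q$. Modulo such $\ell^e$, the residues are chosen as follows.
\begin{itemize}
 \item If $\ell\mid r$, take $P_2\equiv(r-1)c^{-1}$ and $s\equiv1$. Then $P_2c+1\equiv r$, and $P_2$ is a unit because $r-1$ is.
 \item If $\ell\nmid r$, take $P_2\equiv c^{-1}$ and $s\equiv r\,2^{-1}$. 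Then $P_2c+1\equiv2$.
\end{itemize}
At the power of $2$, the first rule applies when $a$ is even. When $a$ is odd, $c$ is even, and one takes $P_2\equiv1$ and $s\equiv r(c+1)^{-1}$. The Chinese remainder theorem and Dirichlet's theorem then give primes $P_2$ and $s$. Varying $s$ yields infinitely many $b\equiv r\pmod q$ with $d(a,b)\leq5$.
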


\begin{proof}
Replace the modulus $M$ by $\lcm(M,2)$ and, if needed, choose a
parity-compatible subprogression. First prescribe a residue
$\gamma\pmod M$ with $\gamma\equiv a\pmod2$. We construct
primes $p,q,s$ such that
\begin{equation}\label{eq:progression-construction}
 t=s\bigl(q(ap+1)+1\bigr)\equiv\gamma\pmod M.
\end{equation}
Choose an odd prime $p$ such that $c=ap+1$ is a unit modulo every
odd prime power dividing $M$. This is possible by the same reduced
residue argument as in Lemma~\ref{lem:auxiliary}. For an odd prime
power $\ell^e\mid M$, prescribe unit residues $q_\ell,s_\ell$
modulo $\ell^e$ as follows:
\[
 (q_\ell,s_\ell)=
 \begin{cases}
 ((\gamma-1)c^{-1},1),&\ell\mid\gamma,\\
 (c^{-1},\gamma\,2^{-1}),&\ell\nmid\gamma.
 \end{cases}
\]
Here the inverses are taken modulo $\ell^e$. In both cases,
$s_\ell(q_\ell c+1)\equiv\gamma\pmod{\ell^e}$.
At the power of $2$ dividing $M$, use the first formula if $a$ is
even: then $c$ is odd and $\gamma$ is even. If $a$ is odd, then
$c$ is even and $\gamma$ is odd; take $q_2=1$ and
$s_2=\gamma(c+1)^{-1}$.

The Chinese remainder theorem combines these prescriptions into
reduced residue classes for $q$ and $s$. Dirichlet's theorem provides
primes in both classes. Fix $p,q$ and vary $s$ to obtain infinitely
many distinct values of $t$ in~\eqref{eq:progression-construction}.
Each is reached by the five-step path
\[
 a\to ap\to ap+1\to q(ap+1)\to q(ap+1)+1\to t.
\]
This proves the same-parity assertion. For a target residue $\beta$
of the opposite parity, apply the construction to $\gamma=\beta-1$
and take the additional step $t\to t+1$.
\end{proof}

\begin{corollary}\label{cor:congruences}
No nonempty finite system of congruence conditions on $b$, with $a$
fixed, can force $d(a,b)\geq7$ for every positive integer satisfying
those conditions.
\end{corollary}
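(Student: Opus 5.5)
The corollary is a direct consequence of Proposition~\ref{prop:progressions}. The plan is to reduce an arbitrary nonempty finite system of congruence conditions to a union of residue classes modulo a single modulus, and then apply the proposition to one of those classes. I will read ``congruence conditions'' as conditions of the form $b\equiv\beta_i\pmod{M_i}$, possibly combined with finitely many non-congruences $b\not\equiv\beta'_j\pmod{M'_j}$. In every reading the admissible set of $b$ is determined by $b\bmod M$, where $M$ is the least common multiple of all moduli involved. I also take ``nonempty'' to mean that at least one positive integer satisfies the system; if it meant only that the list of conditions is nonempty, an inconsistent system would satisfy the conclusion vacuously, so this is the meaningful reading.

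Step one: since the satisfying set $S\subseteq\N$ is nonempty and depends only on $b\bmod M$, pick $b_0\in S$. Then the whole progression $\{b\in\N: b\equiv b_0\pmod M\}$ lies in $S$.

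Step two: apply Proposition~\ref{prop:progressions} to this progression with the fixed vertex $a$. It yields infinitely many $b$ in the progression, hence in $S$, with $d(a,b)\leq6$. Thus the system cannot force $d(a,b)\geq7$ for all its solutions. In fact it cannot even force $d(a,b)\geq7$ for all sufficiently large solutions, and if $b_0\equiv a\pmod2$ one obtains $d(a,b)\leq5$.

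No step here is a real obstacle, since the arithmetic is all in Proposition~\ref{prop:progressions}. The only point needing care is the interpretive one: making precise that a finite system of congruences defines a union of residue classes modulo a common modulus, and that nonemptiness supplies at least one full class.
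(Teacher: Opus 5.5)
Your proposal is correct and matches the paper's own proof: the paper also notes that a finite system of congruences defines a union of residue classes modulo a common modulus, and then applies Proposition~\ref{prop:progressions} to a nonempty class. Your extra remarks on non-congruences and on reading ``nonempty'' as ``having a solution'' are harmless refinements of the same argument.
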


\begin{proof}
A finite system of congruence conditions describes a union of residue
classes modulo a common modulus. Each nonempty class contains
infinitely many vertices in $B_6(a)$ by
Proposition~\ref{prop:progressions}.
\end{proof}

\section{Open questions}\label{sec:questions}

Theorem~\ref{thm:main} leaves the diameter in $\{6,7\}$ and the
radius in $\{4,5\}$. Dickson's conjecture predicts the smaller value
in each case.

\begin{question}\label{q:diameter}
Is $\diam(G)=6$?
\end{question}

For fixed endpoints, every path of length at most six has one of
$\sum_{j=0}^{6}4^j=5461$ operation words. Some resulting arithmetic
conditions involve three prime variables, for example
\[
 Yr=q(Xp+T)+S,
\]
with fixed integer coefficients. Fixing one prime may lead to two
linear forms that must be prime simultaneously. This is a useful
source of sufficient conditions, but it need not be the only route to
a short path. A proof of diameter six may exploit the choice among
several operation words or the variation of all auxiliary primes at
once. Proposition~\ref{prop:progressions} shows why a finite system
of congruence conditions cannot force a counterexample with one
endpoint fixed.

\begin{question}\label{q:radius}
Is $\rad(G)=4$, and which vertices attain the radius?
\end{question}

The stronger assertion $\ecc(1)=4$ would give $\rad(G)=4$ by
Proposition~\ref{prop:radius}. Since $d(1,n)\leq4$ is already known
for every even $n$, this approach requires only the odd vertices to be
considered. If $\ecc(1)=5$, a different vertex could still have
eccentricity four, so the radius would remain to be determined.

\appendix
\titleformat{\section}{\normalfont\Large\bfseries\filcenter}
  {Appendix \thesection.}{0.6em}{}

\section{A finite modular exclusion algorithm}\label{app:exclusion}

This appendix gives all the data and rules needed to reproduce the
lower bound and to check its certificate separately. The pseudocode
uses finite sets; the accompanying implementation of the forward
procedure stores these sets as bitsets. Let $a,b$ be fixed endpoints
and let $w$ be an operation
word. Each occurrence of multiplication or division receives its own
prime variable. Repeated prime values are allowed. A partial
assignment $\theta$ specifies exact values for some of these
variables; the remaining variables are called \emph{free}. In the
pseudocode, $\theta$ is a vector indexed by word positions: zero marks
a free prime variable, and entries at additive positions are always
zero. A positive entry fixes the corresponding prime exactly.

\subsection{Propagation modulo a prime power}
Fix $m=\ell^e$, and temporarily require every free prime variable
to be different from $\ell$. Its residue is then a unit modulo $m$.
Let $R\subseteq\Z/m\Z$ be the current set of possible residues.
Starting with $R=\{a\bmod m\}$, propagate it through $w$ by
\[
\begin{aligned}
 +1 &: R\longmapsto\{x+1:x\in R\},\\
 -1 &: R\longmapsto\{x-1:x\in R\},\\
 \times p\text{ with }p\text{ fixed}
    &: R\longmapsto\{px:x\in R\},\\
 \div p\text{ with }p\text{ fixed}
    &: R\longmapsto\{y:py\in R\},\\
 \times p\text{ or }\div p\text{ with }p\text{ free}
    &: R\longmapsto\{ux:x\in R,\ u\in(\Z/m\Z)^\times\}.
\end{aligned}
\]
All operations in these sets are modulo $m$. The last rule is valid
for division because inversion permutes the units. For computation,
one may use
\begin{equation}\label{eq:unit-orbits}
 \{ux:u\in(\Z/m\Z)^\times\}
 =\{y:\gcd(y,m)=\gcd(x,m)\},
\end{equation}
where residues are represented by integers from $0$ to $m-1$.

The following set-based pseudocode implements these rules. The
preimage in the division case also applies when $p$ divides $m$.
\par\medskip
\begingroup
\small
\noindent\begin{minipage}{\linewidth}
\begin{verbatim}
Possible(w, theta, m):
    R := {a mod m}
    for j = 1, ..., length(w):
        if w[j] is +1:
            R := {(x+1) mod m : x in R}
        else if w[j] is -1:
            R := {(x-1) mod m : x in R}
        else if theta[j] = 0:
            D := {gcd(x,m) : x in R}
            R := {y in 0,...,m-1 : gcd(y,m) in D}
        else:
            p := theta[j]
            if w[j] is multiply:
                R := {p*x mod m : x in R}
            else:
                R := {y in 0,...,m-1 : (p*y mod m) in R}
        if R is empty:
            return FALSE
    return (b mod m) in R
\end{verbatim}
\end{minipage}\par
\endgroup
\medskip

Denote by $\operatorname{Possible}(w,\theta,m)$ the assertion that
$b\bmod m$ belongs to the final residue set. These rules may admit
modular paths that do not lift to positive integral paths. This causes
no problem: the sets are necessary-condition overestimates, and only
their failure to contain the target is used.

\subsection{Exceptional prime values and recursion}
If $\operatorname{Possible}(w,\theta,\ell^e)$ is false, a genuine
path realizing $w$ must assign the exact prime $\ell$ to at least
one free variable. This gives a finite branching rule.

\begin{algorithm}[Exclusion and certificate generation]\label{alg:exclude}
The procedure $\operatorname{Exclude}(w,\theta)$ returns true only
when the word $w$ is impossible under the partial assignment
$\theta$. For each successful call, a table $C$ records a modulus
whose failure forces the recursive branches. A separate table $H$
memoizes return values. Both tables are initially empty for each pair
of endpoints.
\end{algorithm}

\par\medskip
\begingroup
\small
\noindent\begin{minipage}{\linewidth}
\begin{verbatim}
Exclude(word w, partial assignment theta):
    if (w,theta) is in H:
        return H[w,theta]
    J := the free prime-variable positions in w
    for (ell, m) in the ordered modulus list:
        if not Possible(w, theta, m):
            all_children_excluded := TRUE
            for j in J, in increasing order:
                theta_j := theta with the j-th prime set to ell
                if not Exclude(w, theta_j):
                    all_children_excluded := FALSE
                    break
            if all_children_excluded:
                C[w,theta] := (ell,m)
                H[w,theta] := TRUE
                return TRUE
    H[w,theta] := FALSE
    return FALSE
\end{verbatim}
\end{minipage}\par
\endgroup
\medskip

The driver enumerates the words and retains the records needed to
check the excluded roots.
\par\medskip
\begingroup
\small
\noindent\begin{minipage}{\linewidth}
\begin{verbatim}
ExcludeAll(a, b, L):
    initialize H and C as empty tables
    unresolved := empty list
    roots := empty list
    for k = 0, ..., L:
        for w in lexicographic order (+1,-1,multiply,divide)^k:
            theta := zero_vector(k)
            if Exclude(w, theta):
                append (w,theta) to roots
            else:
                append w to unresolved
    retain in C only nodes reachable from roots
    attach the input header (a,b,L) to C
    return (unresolved, C)
\end{verbatim}
\end{minipage}\par
\endgroup
\medskip

The children of a node $(w,\theta)$ with record $(\ell,m)$ are
all nodes obtained by setting one free position to $\ell$.
Reachability in the last procedure is computed using these children.
When no variables are free, the empty family of children is accepted.
Returning false means only that these modular tests did not exclude
the word. It does not assert the existence of a path. Memoization and
the bitset representation affect efficiency, not the logic.

\begin{proposition}\label{prop:algorithm-sound}
The procedure terminates. If $\operatorname{Exclude}(w,\theta)$
returns true, no positive-integer path from $a$ to $b$ realizes $w$
with prime values extending $\theta$.
\end{proposition}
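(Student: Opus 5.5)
We prove the two assertions separately. Termination follows from a well-founded measure on the pairs $(w,\theta)$. Soundness follows by induction along that measure, combined with a lemma that the residue-set propagation correctly overestimates where a genuine path can be modulo $m$.

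\emph{Termination.} Every recursive call $\operatorname{Exclude}(w,\theta_j)$ keeps the same word $w$ and turns one free position of $\theta$ into a fixed one. The number of free prime-variable positions therefore drops by exactly one with each call. That number is at most $\operatorname{length}(w)$, so the recursion depth is bounded. Each call also loops only over the finite modulus list and the finite set $J$, and each call of $\operatorname{Possible}$ runs over finitely many residues. Memoization only short-circuits calls, so the whole procedure terminates.

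\emph{Propagation lemma.} Fix $m=\ell^e$ and a positive-integer path $a=x_0,x_1,\dots,x_k=b$ realizing $w$ with prime values $p_j$. Suppose these primes extend $\theta$, and suppose every free variable takes a value different from $\ell$. We claim that $x_j\bmod m$ lies in the $j$-th set $R_j$ computed by $\operatorname{Possible}(w,\theta,m)$. We prove this by induction on $j$.
\begin{itemize}
\item The additive steps and multiplication by a fixed prime are immediate.
\item For exact division by a fixed $p$ we have $x_{j-1}=p\,x_j$. Hence $p\,x_j\bmod m$ lies in $R_{j-1}$, which is exactly the preimage rule. This holds even when $p=\ell$.
\item For a free variable with $p_j\ne\ell$, the residue of $p_j$ is a unit $u$ modulo $m$. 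Then $x_j\equiv u\,x_{j-1}$ or $x_j\equiv u^{-1}x_{j-1}$, and both lie in the unit orbit. By \eqref{eq:unit-orbits} this orbit is the set computed in the pseudocode.
\end{itemize}
Consequently $b\bmod m\in R_k$, and no intermediate $R_j$ is empty. Hence $\operatorname{Possible}(w,\theta,m)$ is true. Taking the contrapositive: if $\operatorname{Possible}(w,\theta,m)$ is false, then every realizing path extending $\theta$ assigns the value $\ell$ to some free position $j\in J$.

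\emph{Soundness.} We induct on the number of free positions, or equivalently on the order in which values are committed to $H$. Suppose $\operatorname{Exclude}(w,\theta)$ returns true. Either the true value was retrieved from $H$, in which case it was produced by an earlier successful computation for the same node, or there is a recorded modulus $(\ell,m)$ with $\operatorname{Possible}(w,\theta,m)$ false and $\operatorname{Exclude}(w,\theta_j)$ true for every $j\in J$.

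Take any realizing path extending $\theta$. By the lemma it sets some $p_j=\ell$ with $j\in J$, so it extends $\theta_j$. By the induction hypothesis, applied to nodes with fewer free positions, no such path exists, which is a contradiction. When $J$ is empty, the lemma already shows that no path exists.

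The one delicate point is memoization. Values stored in $H$ are only ever set after a full evaluation, and a node is never revisited during its own evaluation, because its children strictly reduce the number of free positions. So every cached true value is justified by the same argument, and the induction goes through. The main care needed is in the lemma's case analysis: the division rule must be stated for fixed primes including $p=\ell$, and the free case must genuinely exclude $\ell$.
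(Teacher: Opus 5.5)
Your proof is correct and follows the same route as the paper. Termination holds because each recursive call fixes one more free prime variable. Soundness is by induction on the number of free variables, using that a failed propagation modulo $\ell^e$ forces some free variable to equal $\ell$; your explicit propagation lemma (including fixed division by $p=\ell$) and your handling of memoization just spell out steps the paper compresses into a sentence each.
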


\begin{proof}
Every recursive call assigns one previously free variable, so the
depth is bounded by the number of prime operations in $w$.
Prove soundness by induction on that number of free variables.
If there are none, a failed modular propagation contradicts a
necessary congruence condition for any genuine path.

Otherwise, suppose the procedure returns true at modulus $\ell^e$.
If a genuine path assigned no free variable the prime $\ell$, all
free residues would be units, and its reduction would be admitted by
the propagation rules. This contradicts the failed propagation. Thus
at least one free variable equals $\ell$. The corresponding recursive
branch excludes the path by the induction hypothesis. Since every
such branch returns true, no genuine path remains.
\end{proof}

\subsection{A separate certificate checker}
The certificate is a finite table keyed by $(w,\theta)$, with a pair
$(\ell,m)$ at each key. To check a node, propagate possible residues
backward from $b$. The checker below uses explicit sets and forms unit
orbits by multiplication, independently of the bitsets and gcd-class
calculation used by the forward implementation.

\par\medskip
\begingroup
\small
\noindent\begin{minipage}{\linewidth}
\begin{verbatim}
BackwardPossible(w, theta, m):
    S := {b mod m}
    U := {u in 0,...,m-1 : gcd(u,m) = 1}
    for j = length(w), ..., 1:
        if w[j] is +1:
            S := {(y-1) mod m : y in S}
        else if w[j] is -1:
            S := {(y+1) mod m : y in S}
        else if theta[j] = 0:
            S := {u*y mod m : u in U, y in S}
        else:
            p := theta[j]
            if w[j] is multiply:
                S := {x in 0,...,m-1 : (p*x mod m) in S}
            else:
                S := {p*y mod m : y in S}
    return (a mod m) in S
\end{verbatim}
\end{minipage}\par
\endgroup
\medskip

For free division or multiplication, inversion permutes the units, so
the same rule is valid in either direction. For fixed division, the
backward image is multiplication by $p$, even when $p$ is not a unit.

\begin{algorithm}[Certificate checker]\label{alg:check-certificate}
Given endpoints $a,b$, a nonnegative length bound $L$, and a
certificate $C$, apply the following checks. A failed requirement
rejects the certificate.
\end{algorithm}

\par\medskip
\begingroup
\small
\noindent\begin{minipage}{\linewidth}
\begin{verbatim}
CheckCertificate(a, b, L, C):
    require the input header of C to equal (a,b,L)
    require all keys (w,theta) to be distinct
    for every record C[w,theta] = (ell,m):
        require w to be a word in the four allowed operations
        require theta to have length length(w)
        require additive positions of theta to be zero
        require every other entry of theta to be zero or prime
        require ell to be prime and m in {ell,ell^2}
        require not BackwardPossible(w,theta,m)
        for each free prime-variable position j:
            theta_j := theta with the j-th prime set to ell
            require (w,theta_j) to be a key in C
    for k = 0, ..., L:
        for every word w of length k:
            require (w,zero_vector(k)) to be a key in C
    return ACCEPT
\end{verbatim}
\end{minipage}\par
\endgroup
\medskip

\begin{proposition}\label{prop:certificate-sound}
If Algorithm~\ref{alg:check-certificate} accepts, then $d(a,b)>L$.
\end{proposition}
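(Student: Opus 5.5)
The plan is to prove, by induction on the number of free prime variables, the following claim: if $(w,\theta)$ is a key of an accepted certificate $C$, then no positive-integer path from $a$ to $b$ realizes $w$ with prime values extending $\theta$. Once this holds, the final loop of Algorithm~\ref{alg:check-certificate} guarantees that $(w,\text{zero vector})$ is a key for every word $w$ of length at most $L$. A zero vector imposes no constraint, so no word of length at most $L$ is realized by any path. Since every path of length $k$ in $G$ realizes some word of length $k$ with suitable prime values, this gives $d(a,b)>L$. The header check fixes the endpoints and $L$ to those intended.

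The first key step is a soundness lemma for $\operatorname{BackwardPossible}$. Fix $m=\ell^e$ and a genuine path $a=x_0,x_1,\dots,x_k=b$ realizing $w$ under an assignment extending $\theta$. Suppose every free variable takes a value different from $\ell$. I would show by downward induction on $j$ that the residue of $x_j$ modulo $m$ lies in the set $S$ held after processing positions $k,\dots,j+1$. Each operation is checked directly.
\begin{itemize}
\item For $+1$, we have $x_{j-1}=x_j-1$, and the update $S\mapsto S-1$ is applied.
\item For $-1$, the argument is symmetric.
\item For a fixed multiplication, $x_j=px_{j-1}$, so $x_{j-1}\bmod m$ lies in the preimage $\{x: px\in S\}$.
\item For a fixed division, $x_{j-1}=px_j$, so $x_{j-1}\bmod m\in pS$. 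This holds even when $p\mid m$.
\item For a free multiplication or division by $p\neq\ell$, $p$ is a unit modulo $m$. Then $x_{j-1}\equiv p^{\pm1}x_j$, and $p^{\pm1}$ lies in $U$, so $x_{j-1}\bmod m\in US$.
\end{itemize}
Hence $a\bmod m\in S$ at the end. It follows that whenever $\operatorname{BackwardPossible}(w,\theta,m)$ is false, every genuine path must assign $\ell$ to at least one free position.

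The induction step then mirrors the proof of Proposition~\ref{prop:algorithm-sound}. Take a key $(w,\theta)$ with record $(\ell,m)$, and suppose a genuine path realizes $w$ extending $\theta$.
\begin{itemize}
\item If $\theta$ has no free positions, the lemma applies vacuously, which contradicts the required failure of $\operatorname{BackwardPossible}$.
\item Otherwise, the lemma yields a free position $j$ whose prime equals $\ell$; $\ell$ is a genuine prime by the checker's requirement. The path then also extends $\theta_j$. The checker required $(w,\theta_j)$ to be a key, and $\theta_j$ has one fewer free variable, so the induction hypothesis rules the path out.
\end{itemize}
The induction is well founded because the number of free positions is finite and strictly decreases. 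The other syntactic checks ensure that the positions and values are meaningful: additive positions are zero, and fixed entries are prime.

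I expect the main obstacle to be the care needed in the soundness lemma at non-unit fixed primes and at free primes equal to $\ell$. The fixed-division case must use the image $pS$ rather than a preimage. The free case must be confined to primes different from $\ell$, and that is exactly why the branching on $\ell$ is needed. Two further details deserve attention. First, the residues must be the reductions of actual positive integers, which is the reason the sets are overestimates. Second, repeated prime values are allowed, so one path may match several branches, and this is harmless because any single branch suffices.
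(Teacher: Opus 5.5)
Your proposal is correct and follows the paper's proof. Both argue by induction on the number of free variables over the keys of $C$: a failed backward test at $\ell^e$ forces $\ell$ at some free position, and the corresponding child is a key with one fewer free variable. Your operation-by-operation check of $\operatorname{BackwardPossible}$ spells out what the paper asserts in one sentence by reference to forward propagation; it covers the image $pS$ for fixed division and the unit orbit for free primes $\neq\ell$, the latter relying on $\ell$ being prime.
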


\begin{proof}
Backward propagation tests the same modular relations as forward
propagation, with their directions reversed. Thus a failed backward
test at $\ell^e$ implies that any genuine path realizing the recorded
word and assignment uses $\ell$ at a free position. Every resulting
child is present in the certificate and has one fewer free variable.
Induction on the number of free variables therefore excludes every
recorded node, starting with those having no free variables. All
unassigned words of length at most $L$ are present, so every path of
such a length is excluded.
\end{proof}

\subsection{Input and output for the distance-six example}
Use the endpoints~\eqref{eq:a}--\eqref{eq:b} and $L=5$.
Take the primes
\[
 2,3,5,7,11,13,17,19,23,29,31,37,41,43,47,53,59,61,67,71,73.
\]
The ordered modulus list consists first of these primes and then their
squares, in the same order. The result is
\begin{center}
\begin{tabular}{@{}ccc@{}}
\toprule
Maximum word length & Words examined & Unresolved words \\
\midrule
0 & 1    & 0 \\
1 & 5    & 0 \\
2 & 21   & 0 \\
3 & 85   & 0 \\
4 & 341  & 0 \\
5 & 1365 & 0 \\
\bottomrule
\end{tabular}
\end{center}
All arithmetic is exact integer or finite-set arithmetic. In
particular, no numerical tolerance and no search bound on prime
values enters this calculation.

With the word order and memoization specified above, the forward
implementation visits $21906$ distinct exclusion states and performs
$84421$ calls to \texttt{Possible}. The exported certificate contains
$21906$ nodes. The separate checker accepts all of them and verifies
the presence of all $1365$ unassigned words. These results were
reproduced with Python~3.12.

The accompanying ancillary directory \path{anc/} contains
\path{verify_draft.py}, which reads the displayed integers directly
from the source and generates the certificate, and
\path{check_certificate.py}, which checks the certificate against the
same input endpoints and length bound. The files
\path{modular_certificate.json}, \path{verification_results.json},
and \path{certificate_check_results.json} record the certificate and
the run results. Download and extract the complete source archive so
that the manuscript source is in the parent directory of \path{anc/}.
From \path{anc/}, the complete verification is reproduced by
\par\medskip
\begingroup
\small
\noindent\begin{minipage}{\linewidth}
\begin{verbatim}
python verify_draft.py ../additive_multiplicative_graph.tex
python check_certificate.py ../additive_multiplicative_graph.tex
\end{verbatim}
\end{minipage}\par
\endgroup
\medskip
The modular procedures use the Python standard library. The first
command also performs the primality proofs in Appendix~\ref{app:paths}
and requires \texttt{python-flint}; the second command has no external
library dependency.

\section{Checking the six-step path}\label{app:paths}

For the six-step example, the independent upper-bound check is:
\par\medskip
\begingroup
\small
\noindent\begin{minipage}{\linewidth}
\begin{verbatim}
Read the exact integers a, b, p, q, r from Section 3.
Assert IsPrime(p), IsPrime(q), and IsPrime(r).
Assert q*(a*p + 1) - 1 == (b + 1)*r.
Return [a, a*p, a*p+1, q*(a*p+1), (b+1)*r, b+1, b].
\end{verbatim}
\end{minipage}\par
\endgroup
\medskip
Here \texttt{IsPrime} must be a primality-proving routine, rather
than a probable-prime test. For example, FLINT's
\texttt{fmpz\_is\_prime} returns $1$ only after proving
primality~\cite{FLINT}. Combined with Appendix~\ref{app:exclusion},
this checks both inequalities needed for $d(a,b)=6$.
In the verification run, \texttt{python-flint}~0.9.0 with
FLINT~3.6.0 proved all three integers prime, and exact integer
arithmetic verified the identity~\eqref{eq:six-identity}.

\end{document}